\newtheorem{lemma}{Lemma}
\newtheorem{theorem}{Theorem}
\begin{document}

\title{On Transshipment Games with Identical Newsvendors}


\author{Behzad ˜Hezarkhani\\
OPAC, Eindhoven University of Technology, Eindhoven, The Netherlands\\
b.hezarkhani@tue.nl\\
\and
Wieslaw ˜Kubiak\\
Faculty of Business Administration, Memorial University, St.\ John's, Canada \\
wkubiak@mun.ca\\
\and
Bruce Hartman \\
Kogod School of Business, American University, Washington, DC USA \\
bhartman@stfrancis.edu}
\date{\today }
\maketitle

\begin{abstract}
In a transshipment game, supply chain agents cooperate
to transship  surplus products. This note studies the effect of size of transshipment coalitions on the optimal production/order
quantities. It characterizes these quantities for transshipment
games with identical newsvendors and normally distributed market demands. It also gives
a closed form formula for equal allocation in their cores.
\end{abstract}

\textit{Keywords:}  Supply Chain Management, Decision analysis, Inventory, Game Theory

\section{Introduction}

Transshipment is the practice of sharing common resources among supply chain
entities who face uncertain market demands. 
Although transshipment games have been extensively studied in the literature (see %
\citet{Paterson2009} for a recent review), their complexity hinders
the derivation of straightforward analytical results in general. Specifically, the
effect of size of transshipping coalition, i.e.\ the number of transshipping
locations, on the production/order quantities has never been investigated
before in the literature. 
The characterization of optimal quantities is useful in both centralized
supply chain contexts (e.g.\ \citet{Herer2006}), and cooperative
transshipment games (\citet{Hezarkhani2010a}, \citet{Anupindi2001}, and %
\citet{Sosic2006} among others). In the latter case, the key result of %
\citet{Slikker2005} ensures a non-empty core for a transshipment game. 
 However, the question of
how the growth of transshipping coalitions affect the optimal quantities and
expected profit remains open and it will be addressed in this note.

In Section 3 and 4, we characterize the main properties of transshipment amounts and optimal quantities respectively in multi-location/multi-agent
transshipment games for identical agents facing normally distributed
demands. The games are introduced in Section 2. There are three categories of transshipment games:
over-mean, under-mean, and mean games. The game category depends on
the optimal quantity, i.e.\ critical fractile, of a single newsvendor
entity. As the game size grows these optimal quantities get closer to the
distribution mean for the over- and under-mean problems. However, for either
category, we show that there is a threshold value for the transportation cost $t$ such
that the optimal quantity converges to the demand distribution mean for
transportation cost not exceeding the threshold, and to a certain bound
different from the mean for transportation costs above the
threshold. A closed form formula for equal allocation in the core is derived in Section 5.

\section{Cooperative Transshipment Games}

Consider a set $N$ of $n$ newsvendor agents. Each agent $i\in N$ decides its production/order quantity (simply quantity hereafter), $X_{i}$, in anticipation of a random demand $D_{i}$ having continuous and twice differentiable pdf with mean $\mu_{i}$ and standard deviation $\sigma_{i}$. The market selling price, purchasing cost, and salvage value are denoted by $r_{i}$, $c_{i}$, and $\nu_{i}$ respectively ($\nu _{i}<c_{i}<r_{i}$). The newsvendors have the option to form a transshipment coalition to transship their otherwise surplus products to other members of the coalition after the realization of demands. In order to physically move one unit of product from newsvendor $i$ to newsvendor $j$, both members of the same coalition, the transportation cost $t_{ij}$ is incurred. 
 The $W_{ij}$ is the  quantity transshipped from newsvendor $i$ to $j$. In order to avoid trivial
scenarios, we assume that for all $i,j\in N$, $c_{i}<c_{j}+t_{ji}$, $\nu
_{i}<\nu _{j}+t_{ji}$, $r_{i}<r_{j}+t_{ji}$, and $t_{ij}<r_{j}-\nu _{i}$. By $\mathbf{X,}$ $\mathbf{D,}$  we denote vectors of quantities and random demands, 
and by $\mathbf{W}$ the $n\times n$ matrix of transshipped quantities respectively, for agents in $N$.  

The cooperative transshipment game is a cooperative game $(\dot{J},N)$ with the characteristic function $\dot{J}:2^{N}\rightarrow \mathbb{R},$ being a two-stage stochastic program with recourse, which assigns to any coalition $M\subseteq N$ the value $\dot{J}_{M}$  equal to
\begin{equation}
\dot{J}_{M}=\max_{\mathbf{X}}J_{M}(\mathbf{X})=\max_{\mathbf{X}}\mathsf{E}\left[
\sum_{i\in M}\left( {r_{i}\min (X_{i},D_{i})+\nu _{i}H_{i}-c_{i}X_{i}}%
\right) +R_{M}(\mathbf{X},\mathbf{D})\right]  \label{TheGame}
\end{equation}

where for given $\mathbf{X}$ and $\mathbf{D}$, 
\begin{eqnarray}  \label{G}
R_{M}(\mathbf{X},\mathbf{D})&=& \max_{\mathbf{W}} \sum_{i \in M}{\sum_{j \in M}{p_{ij}W_{ij}}} \\
& s.t.&  
 \sum_{j\in M}{W_{ij}} \leq H_{i}, \forall i \in M  \notag \\
& &\sum_{i\in M}{W_{ij}} \leq E_{j}, \forall j \in M  \notag \\
& &W_{ij} \geq 0, \forall i,j \in M  \notag.
\end{eqnarray}

The $H_{i}=\max (X_{i}-D_{i},0)$ and $E_{i}=\max(D_{i}-X_{i},0)$ are newsvendor $i$'s surplus, and unsatisfied demand respectively and $p_{ij}=r_{j}-\nu _{i}-t_{ij}$ is the marginal transshipment profit 
 from  $i$ to  $j$. Equation (\ref{TheGame}) shows that total expected profit in a transshipment coalition is consisting of sum of newsvendors' individual profits as well as the transshipment profit, i.e.\ $R_M$.

Let $\bm{\beta}=\{\beta _{i}|i \in N\}$ be the set of individual allocations in the coalition of $n$ newsvendors (grand coalition). The allocation $\bm{\beta}$ is in the core of the transshipment game if and only if $\sum_{i\in M}\beta _{i}\geq \dot{J}_{M}$ for all $M\subset N$, and $\sum_{i\in N}\beta _{i}=\dot{J}_{N}$\citep{Owen1995}. The key result of \citet{Slikker2005} ensures a non-empty core for a transshipment game without cooperation cost of size $n>1$.
This implies that  it is never disadvantageous for newsvendors to form ever larger coalitions. 
 
\subsection{Cooperative Transshipment Games with Identical Agents}

Clearly, identical agents must play symmetric games. In a symmetric cooperative game, the characteristic function is solely determined by the sizes of coalitions \citep{Luce1975}. For identical newsvendors with a unit transshipment between any two newsvendors results in the same profit $p$, which allows us to suppress the indices of $p_{ij}$. Therefore, a coalition can maximize its transshipment profit by carrying our transshipment in the way that there will be neither any surplus or shortage left. We have
$R_{n}(\mathbf{X},\mathbf{D})=p\min \left(\sum_{i=1}^{n}H_{i},\sum_{i=1}^{n}E_{i}\right).$ The expected transshipment amount is
\begin{equation}
\label{extran}
\omega_{n}(X)=\mathsf{E}\left[\min \left( \sum_{i=1}^{n}H_{i},\sum_{i=1}^{n}E_{i}\right)\right] =\mathsf{E}\left[\min \left(
\sum_{i=1}^{n}X_{i},\sum_{i=1}^{n}D_{i}\right) -\sum_{i=1}^{n}\min \left(
X_{i},D_{i}\right)\right].
\end{equation}%
The last equation holds by $\min (A,B)+C=\min (A+C,B+C)$. The vector of optimal quantities is essentially a singleton, therefore, $\mathbf{X}$ is replaced by a single variable $X$.
Furthermore, we have 
\begin{eqnarray}
&&\mathsf{E}\left[ \min (X,D)\right] =X-\int_{-\infty }^{X}F_{D}(\xi )d\xi \label{eq:swq1},
 \\
&&\mathsf{E}\left[ \min (nX,Z)\right] =nX-\int_{-\infty }^{nX}F_{Z}(\xi )d\xi \label{eq:swq2} 
\end{eqnarray}%
where $F_{D}$ ($f_{D}$) and $F_{Z}$ ($f_{Z}$) are cdfs (pdfs) of the random
variables $D$ and $Z=\sum_{i=1}^{n}D_{i}$ respectively. By substituting the terms in (\ref{TheGame}) and simplifying we obtain
\begin{equation}
J_{n}(X)=n(r-c)X-nt\int_{-\infty }^{X}F_{D}(\xi )d\xi -p\int_{-\infty}^{nX}F_{Z}(\xi )d\xi .  \label{Prof2}
\end{equation}%

For checking the non-emptiness of core in symmetric games, it is sufficient to check the core-membership of equal allocations for if a non-empty core exists, then it must contain equal allocations \citep{Shapley1967}. Therefore, the core of a symmetric transshipment game is non-empty if $m\beta_{n}\geq \dot{J}_{m}$, or $\beta_n \geq \beta_m$, for all $m\leq n$, where $\dot{J}_{m}=\max_{X}J_{m}(X)$ and $\beta_m=\dot{J}_{m}/m$.

\section{Expected Transshipments for Normally Distributed Demands} 

From now on we assume that demands at different locations are normally
distributed. The main motivation behind this assumption comes from the fact
that the normal distribution is a strictly \emph{stable} distribution %
\citep{Fristedt1997}; that is, for the symmetric case, the total demand $%
Z=\sum_{i=1}^{n}D_{i}$ is \emph{normally} distributed with $\mu _{Z}=n\mu $
and $\sigma _{Z}^{2}=n\left( 1+(n-1)\rho \right) \sigma ^{2}$ where $\rho $
is the correlation efficient between every pair of random variables\footnote{%
Note that in order for the covariance matrix to be positive-semidefinite, it
must be the case that $\frac{-1}{n-1}<\rho \leq 1$.}. \cite{Alfaro2003} show
that normal distribution is a good approximation of general distribution
functions in transshipment problem. \citet{Hartman2005}, and \citet{Dong2004}
also restrict their analysis to normal distributions when analyzing the
games among newsvendors. 

Let $\phi $ and $\Phi $ be the pdf and cdf of the standard normal distribution respectively. Using the transformation $Y=\left(X-\mu \right)/\sigma $ and letting $L_{n}=\sqrt{n/\left[ 1+(n-1)\rho \right] }$ we have
\begin{eqnarray}
\int_{-\infty }^{X}F_{D}(\xi )&=
&\sigma \int_{-\infty }^{Y}\Phi
\left( \xi \right) d\xi , \label{eq:drw1}\\
\int_{-\infty }^{nX}F_{Z}(\xi )&=
&n\sigma \int_{-\infty }^{Y}\Phi \left( L_{n}\xi \right) d\xi .\label{eq:drw2}
\end{eqnarray}%

\begin{theorem}
\label{Theo00}
For $Y>0$ $\mathsf{E}(\sum_{i=1}^{n}H_{i})>\mathsf{E}(\sum_{i=1}^{n}E_{i})$, for $Y<0$ $\mathsf{E}(\sum_{i=1}^{n}H_{i})<\mathsf{E}(\sum_{i=1}^{n}E_{i})$, and for $Y=0$ $\mathsf{E}(\sum_{i=1}^{n}H_{i})=\mathsf{E}(\sum_{i=1}^{n}E_{i})$.\end{theorem}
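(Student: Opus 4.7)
The plan is to reduce this to a one-line identity by exploiting the algebraic relationship between $H_i$ and $E_i$. For any real number $a$, the elementary identity $\max(a,0) - \max(-a,0) = a$ holds, so pointwise (for any realization of $D_i$)
\[
H_i - E_i = \max(X - D_i, 0) - \max(D_i - X, 0) = X - D_i.
\]
Summing over $i = 1,\dots,n$ in the symmetric setting (common quantity $X$) gives $\sum_{i=1}^n H_i - \sum_{i=1}^n E_i = nX - Z$, where $Z = \sum_{i=1}^n D_i$.

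Next I would take expectations on both sides. Since $\mathsf{E}[Z] = n\mu$, this yields
\[
\mathsf{E}\!\left[\sum_{i=1}^n H_i\right] - \mathsf{E}\!\left[\sum_{i=1}^n E_i\right] = n(X - \mu) = n\sigma Y,
\]
using the transformation $Y = (X-\mu)/\sigma$ introduced just before the theorem. Since $n,\sigma > 0$, the sign of the right-hand side is exactly the sign of $Y$, and the three cases $Y>0$, $Y<0$, $Y=0$ give strict inequality in one direction, strict inequality in the other, and equality, respectively — which is precisely the claim.

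There is really no main obstacle: the only substantive observation is the pointwise identity $H_i - E_i = X - D_i$, after which the result follows by linearity of expectation and does not even require the normality assumption or the stability property of the normal distribution (those are only needed for later results in the section). In the write-up I would therefore keep the argument to just a few lines, making explicit that the conclusion holds for any demand distribution with mean $\mu$, so that the theorem may be viewed as a distribution-free statement that merely gets restated here under the normality convention of Section 3.
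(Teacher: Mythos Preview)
Your argument is correct. The paper's proof and yours reach the same identity $\mathsf{E}\bigl[\sum H_i\bigr]-\mathsf{E}\bigl[\sum E_i\bigr]=n\sigma Y$, but they get there along different routes. The paper evaluates each expectation separately through the normal-CDF integral formulas, obtaining $\mathsf{E}\bigl[\sum H_i\bigr]=n\int_{-\infty}^{Y}\Phi(\xi)\,d\xi$ and $\mathsf{E}\bigl[\sum E_i\bigr]=n\bigl(-Y+\int_{-\infty}^{Y}\Phi(\xi)\,d\xi\bigr)$, and then subtracts. You instead subtract \emph{first}, via the pointwise identity $H_i-E_i=X-D_i$, and only then take expectations. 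Your route is shorter and, as you note, distribution-free; the paper's route has the side benefit of producing explicit closed-form expressions for each of the two expectations, expressions that feed into the proof of Theorem~\ref{Theo01} and the later computations in Section~\ref{OQ}. Your remark that the theorem itself does not rely on normality is accurate and worth stating.
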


\begin{proof}
We have 
$\mathsf{E}(\sum_{i=1}^{n}H_{i})=n\mathsf{E}\left[ \max (Y-D,0)\right]
=n\int_{-\infty }^{Y}\Phi(\xi )d\xi$ , and
$\mathsf{E}(\sum_{i=1}^{n}E_{i})=n\mathsf{E}\left[ \max (D-Y,0)\right] =n(-Y+\int_{-\infty }^{Y}\Phi(\xi )d\xi) $
 Clearly, $Y>0$ implies $\mathsf{E}(\sum_{i=1}^{n}H_{i})>\mathsf{E}(\sum_{i=1}^{n}E_{i})$, $Y<0$ implies $\mathsf{E}(\sum_{i=1}^{n}H_{i})<\mathsf{E}(\sum_{i=1}^{n}E_{i})$, and $Y=0$ implies $\mathsf{E}(\sum_{i=1}^{n}H_{i})=\mathsf{E}(\sum_{i=1}^{n}E_{i})$.
\end{proof}
Therefore any coalition order quantity \emph{above} the mean results in a positive \emph{net} expected \emph{surplus}, and any coalition order quantity \emph{below} the mean results in a positive \emph{net} expected \emph{shortage}. Moreover, only the mean ensures a perfect match of expected shortage and surplus for the coalition.

\begin{theorem}
\label{Theo01}
A coalition's expected transshipment amount 
reaches its maximum at $Y=0$.
\end{theorem}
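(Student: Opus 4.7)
The plan is to derive a closed-form expression for $\omega_{n}$ as a function of the standardized quantity $Y=(X-\mu)/\sigma$, then show via a single derivative that $\omega_{n}$ is strictly increasing for $Y<0$ and strictly decreasing for $Y>0$, so that the maximum is attained uniquely at $Y=0$.

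First I would combine the second form of (\ref{extran}), which writes $\omega_n$ as $\mathsf{E}[\min(nX,Z)]-n\,\mathsf{E}[\min(X,D)]$, with (\ref{eq:swq1}) and (\ref{eq:swq2}) to cancel the linear $nX$ terms and obtain
\begin{equation*}
\omega_{n}(X) \;=\; n\!\int_{-\infty}^{X}\!F_{D}(\xi)\,d\xi \;-\; \int_{-\infty}^{nX}\!F_{Z}(\xi)\,d\xi.
\end{equation*}
Then I would substitute the normal-specific expressions (\ref{eq:drw1})--(\ref{eq:drw2}) to collapse everything to a single integral,
\begin{equation*}
\omega_{n}(X) \;=\; n\sigma \int_{-\infty}^{Y}\bigl[\Phi(\xi)-\Phi(L_{n}\xi)\bigr]\,d\xi.
\end{equation*}

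Next, differentiating with respect to $Y$ gives $\dfrac{d\omega_{n}}{dY}=n\sigma\bigl[\Phi(Y)-\Phi(L_{n}Y)\bigr]$. The crux is the observation that $L_{n}=\sqrt{n/[1+(n-1)\rho]}\ge 1$, with equality only when $n=1$ or $\rho=1$; in every non-trivial case $L_{n}>1$. Since $\Phi$ is strictly increasing, for $Y>0$ we have $L_{n}Y>Y$ and hence $\Phi(L_{n}Y)>\Phi(Y)$, making the derivative negative; for $Y<0$ we have $L_{n}Y<Y$, making the derivative positive; and at $Y=0$ both terms equal $\tfrac{1}{2}$, so the derivative vanishes. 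This establishes that $Y=0$ is the unique maximizer of $\omega_{n}$, which is also consistent with the balance-of-surplus-and-shortage intuition supplied by Theorem \ref{Theo00}.

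No step is a genuine obstacle: the cancellation in the closed-form simplification is routine, and the monotonicity argument reduces to the elementary fact that $L_{n}>1$ together with the monotonicity of $\Phi$. The only subtlety worth flagging is the degenerate case $\rho=1$, in which all demands coincide, $L_{n}=1$, and $\omega_{n}\equiv 0$, so the claim holds vacuously. For $-1/(n-1)<\rho<1$ and $n\ge 2$, the sign analysis above gives the conclusion.
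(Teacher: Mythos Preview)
Your proof is correct and follows essentially the same route as the paper: both derive the closed form $\omega_{n}(Y)=n\sigma\int_{-\infty}^{Y}[\Phi(\xi)-\Phi(L_{n}\xi)]\,d\xi$, differentiate to get $n\sigma[\Phi(Y)-\Phi(L_{n}Y)]$, and then argue the sign. Your justification via $L_{n}>1$ together with the strict monotonicity of $\Phi$ is in fact tighter than the paper's phrasing (which invokes the convexity/concavity of $\Phi$ and only states $L_{n}>0$), and your explicit handling of the degenerate case $\rho=1$ is a welcome addition.
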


\begin{proof}
By equations (\ref{extran}), (\ref{eq:swq1}), and (\ref{eq:swq2}) we get 
$\omega_{n}(Y)=n\sigma \int_{-\infty }^{Y}\left[\Phi(\xi )-\Phi(L_{n}\xi )\right]d\xi. $
Thus, we have
$\frac{d\omega_{n} (Y)}{dY}=n\sigma\left( \Phi (Y)-\Phi \left( L_{n}Y\right) \right).
$ 
 The function $\Phi $ is convex below zero and concave above zero. As $L_{n}>0$, the expected transshipment is increasing
for $Y<0$, decreasing for $Y>0$, and reaches its maximum at $Y=0.$ 
\end{proof}

\section{Optimal Quantities}
\label{OQ}

Our main goal now is to characterize the optimal quantities for a
symmetric transshipment problem with $n$ newsvendors, or just a problem of
size $n$ for simplicity. Since $J_{n}(X)$ in (\ref{Prof2}) is concave on $X$%
, the optimal quantity can be found from the first order condition.
Let $X_{n}$ be the optimal quantity in the problem of size $n$ and  
$Y_{n}=(X_{n}-\mu )/\sigma $ as its normal transformation. 
The optimal quantity for a problem of size $n$ is obtained through 
\begin{equation}
R=\gamma \Phi (Y_{n})+\tilde{\gamma}\Phi \left( L_{n}Y_{n}\right) .  \label{OptY}
\end{equation}

where $R=(r-c)/(r-\nu)$ is the critical fractile, $\gamma =t/(r-\nu)$, and $\tilde{\gamma}=1-\gamma = p/(r-\nu).$ We use the modified notation to
take advantage of the symmetry in the problem.  The main
challenge in characterizing the optimal quantity $Y_{n}$ is its \emph{%
implicit} form given in (\ref{OptY}). The following lemma shows the relation between optimal quantities in a problem of size $n$ and that of single constructing newsvendor. Due to the symmetry in the problem, we construct the proofs for only half of the cases.

\begin{lemma}
\label{LemY1}For $n\geq 1$, if $Y_{1}>0$, then $Y_{n}>0$; if $Y_{1}<0$, then 
$Y_{n}<0$; and if $Y_{1}=0$, then $Y_{n}=0$.
\end{lemma}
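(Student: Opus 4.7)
The plan is to exploit the implicit equation (\ref{OptY}) and the fact that it expresses $R$ as a convex combination of two values of the standard normal cdf. Observe that, under the standing assumption $t_{ij}<r_j-\nu_i$, in the symmetric case we have $0\le \gamma=t/(r-\nu)<1$, so $\gamma$ and $\tilde{\gamma}=1-\gamma$ are nonnegative weights summing to one. For $n=1$ the factor $L_1=\sqrt{1/(1+0\cdot\rho)}=1$, so (\ref{OptY}) collapses to $R=\Phi(Y_1)$, which immediately gives $Y_1>0 \Leftrightarrow R>1/2$, $Y_1=0 \Leftrightarrow R=1/2$, and $Y_1<0 \Leftrightarrow R<1/2$.

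Next I would deal with general $n\ge 1$. The key observation is that $L_n>0$ for every $n$, so $Y_n$ and $L_n Y_n$ have the same sign. Hence the two values $\Phi(Y_n)$ and $\Phi(L_n Y_n)$ lie on the same side of $\Phi(0)=1/2$: both exceed $1/2$ when $Y_n>0$, both equal $1/2$ when $Y_n=0$, and both are below $1/2$ when $Y_n<0$. Since $R$ equals the convex combination $\gamma\Phi(Y_n)+\tilde{\gamma}\Phi(L_n Y_n)$, the same trichotomy transfers to $R$ versus $1/2$.

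Combining the two steps yields $Y_n>0 \Leftrightarrow R>1/2 \Leftrightarrow Y_1>0$, and similarly for the zero and negative cases, which is exactly the claim.

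I do not expect any genuine obstacle here; the proof is essentially a sign‑tracking argument. The only thing to double‑check is that $\gamma\in[0,1]$ so that the right‑hand side of (\ref{OptY}) is a bona fide convex combination (otherwise one would need a monotonicity argument in $Y_n$ on $J_n$ rather than the convex‑combination shortcut). This is guaranteed by the paper's blanket assumption $t<r-\nu$, and by the concavity of $J_n$ noted just before Lemma~\ref{LemY1} which ensures that $Y_n$ is well defined as the unique solution of (\ref{OptY}).
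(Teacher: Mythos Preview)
Your argument is correct and follows essentially the same route as the paper: both use that $L_n>0$ forces $Y_n$ and $L_nY_n$ to share a sign, so $\Phi(Y_n)$ and $\Phi(L_nY_n)$ lie on the same side of $1/2$, and then the fact that $R$ is a convex combination of these (since $0\le\gamma<1$) pins down the sign of $Y_n$ from the position of $R$ relative to $1/2$. The paper phrases the under-mean case as a contradiction while you state the full trichotomy as an if-and-only-if, and you make the step $L_1=1\Rightarrow R=\Phi(Y_1)$ explicit, but the substance is identical.
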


\begin{proof}
Take the case with $Y_{1} <0$ which is equivalent to $R<1/2.$ By (\ref{OptY}) we get $R=\gamma \Phi (Y_{n})+\left( 1-\gamma \right)  \Phi\left(L_{n}Y_{n}\right) <1/2$. Since $Y_{n}$ and $L_{n}Y_{n}$ have the same sign, applying the monotonic increasing function $\Phi$ results in both terms simultaneously being (a) less than, (b) greater than, or (c) equal to $1/2.$ Noting $ 0\leq \gamma <1,$ it directly follows that the cases (b) and (c) result in contradiction. 
\end{proof}

Let $g=r-c$ and $\tilde{g}=c-{\nu }$ be the benefits of selling a unit
product and avoiding salvage markdown respectively. If $R<1/2$, i.e.\ $g<%
\tilde{g}$, then the optimal quantity for an individual newsvendor in \emph{any} problem is less than the demand mean $\mu $, hence we refer to this type of newsvendor (problem) as an \emph{under-mean newsvendor} (problem). Similarly, if $R>1/2$, i.e.\ $g>\tilde{g}$, then the optimal quantity for an individual newsvendor in any problem is larger than the demand mean $\mu $, hence we refer to this type of newsvendor (problem) as an \emph{over-mean newsvendor} (problem). Finally, if $R=1/2$, i.e.\ $g=\tilde{g}$, then the optimal quantity for an individual newsvendor in any problem equals the demand mean $\mu $, hence we refer to this type of newsvendor (problem) as a \emph{mean newsvendor} (problem). Lemma 1 then states that irrespective of transportation cost, any coalition of identical over-mean (under-mean) newsvendors will remain over-mean (under-mean).
The following theorem shows that the over-mean problems reduce their optimal
quantities, and the under-mean problems increase their optimal quantities as
their sizes grow.

\begin{theorem}
\label{TheoY2} For over-mean problems, $Y_{1}>Y_{2}>...>Y_{n}>...>0$. For
under-mean problems, $Y_{1}<Y_{2}<...<Y_{n}<...<0$.
\end{theorem}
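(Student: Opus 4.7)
The plan is to exploit the implicit characterization (\ref{OptY}) and argue by monotonicity in the single parameter $L_n$. By Lemma \ref{LemY1} the sign of $Y_n$ is already fixed (positive in the over-mean case, negative in the under-mean case), so it suffices to prove strict monotonicity of $|Y_n|$ in $n$. I would do this by rewriting (\ref{OptY}) as $h(Y_n, L_n) = R$ for the auxiliary function $h(Y, L) = \gamma \Phi(Y) + \tilde{\gamma}\, \Phi(LY)$, and tracking how the solution $Y$ must move when $L$ is increased.

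The first step is to verify that $n \mapsto L_n$ is strictly increasing. Differentiating $L_n^2 = n/[1+(n-1)\rho]$ in the continuous parameter yields $(1-\rho)/[1+(n-1)\rho]^2$, which is strictly positive whenever $\rho < 1$ (the borderline $\rho = 1$ is degenerate since demands are then perfectly correlated and transshipment is pointless, so this assumption is harmless). The second step is that for any fixed $L > 0$ the map $Y \mapsto h(Y, L)$ is strictly increasing on $\mathbb{R}$, because $\Phi$ is strictly increasing and $\gamma, \tilde{\gamma} \geq 0$ with $\gamma + \tilde{\gamma} = 1$; hence $h(Y, L) = R$ defines a unique function $Y(L)$.

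The final step is a sign-split comparative static. In the over-mean case Lemma \ref{LemY1} gives $Y_n > 0$, and for $Y > 0$ the term $\Phi(LY)$ is strictly increasing in $L$, so $h(Y, L)$ is strictly increasing in $L$ at each fixed positive $Y$. Since $R$ is fixed, the unique solution $Y(L)$ must strictly decrease when $L$ is replaced by any larger value; specialising to $L = L_n < L_{n+1}$ gives $Y_{n+1} < Y_n$, and chaining with the positivity from Lemma \ref{LemY1} yields $Y_1 > Y_2 > \cdots > 0$. The under-mean case is symmetric: for $Y < 0$ the term $\Phi(LY)$ is strictly \emph{decreasing} in $L$, so $h(Y, L)$ is strictly decreasing in $L$ at each fixed negative $Y$, and an increase in $L$ must therefore be offset by an \emph{increase} in $Y$ to maintain $h(Y, L) = R$, giving $Y_n < Y_{n+1} < 0$.

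The only subtle point is that the direction in which $Y$ responds to an increase of $L$ flips with the sign of $Y$, because $LY$ moves toward $+\infty$ or $-\infty$ accordingly. Once Lemma \ref{LemY1} pins this sign, the argument reduces to elementary monotonicity of a convex combination of two standard-normal cdfs, with no appeal to the first-order condition for concavity of $J_n$ beyond what (\ref{OptY}) already encodes.
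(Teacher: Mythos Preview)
Your argument is correct and follows essentially the same route as the paper: both use the optimality condition (\ref{OptY}), the sign information from Lemma \ref{LemY1}, and the strict monotonicity of $L_n$ together with that of $\Phi$ to force the claimed ordering; the paper phrases this as a contradiction on $Y_{n-1}\ge Y_n$, while you phrase it as a comparative-static for the implicit solution $Y(L)$, but the content is identical. Your explicit remark that strict monotonicity of $L_n$ requires $\rho<1$ is a useful clarification that the paper leaves implicit.
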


\begin{proof}
Take that case with $Y_{1}<0$ and suppose that  $Y_{n-1}\geq Y_{n}$ for some $n\geq 2$. The function $\Phi$ is
monotonic increasing hence we have $\Phi (Y_{n-1})\geq \Phi (Y_{n})$. By Lemma 1, $Y_{n}<0$ for all $n\geq 1$, thus we also get  $L_{n-1}Y_{n-1}>L_{n}Y_{n}$. Applying the $\Phi$ function will also keep the direction of inequality. Hence we get $\gamma \Phi (Y_{n-1})+\left( 1-\gamma\right) \Phi \left( L_{n-1}Y_{n-1}\right) >\gamma \Phi (Y_{n})+\left( 1-\gamma\right) \Phi \left( L_{n}Y_{n}\right)$
which violates the optimality condition in (\ref{OptY}).
Therefore, $Y_{n-1}<Y_{n}$ for all $n\geq 2$. 
The cases for $Y_{1}>0$ and $Y_{1}=0$ are proven in a similar manner. 
\end{proof}
In conjunction with Theorem \ref{Theo01}, Theorem \ref{TheoY2} reveals that as the size of transshipment coalitions grows, coalitions increase their expected transshipment amount.
Although the risk pooling mechanism naturally embedded in a transshipping
coalition---revealed in Theorem \ref{TheoY2}---makes the mean $\mu $ a
natural target for the optimal quantity in a coalition, this optimal
quantity does \emph{not} necessarily converge to the mean $\mu $ as the
problem size grows. This is shown in Theorem \ref{lim} presented later in
this section. Before presenting this theorem we first get a closer look at
the sequence $L_{n}Y_{n}$ which is the other ingredient of implicit formula (\ref%
{OptY}).

\begin{theorem}
\label{Theo2fh} For over-mean problems, $0<Y_{1}<L_{2}Y_{2}<...<L_{n}Y_{n}<...$. For
under-mean problems, $0>Y_{1}>L_{2}Y_{2}>...>L_{n}Y_{n}>...$.
\end{theorem}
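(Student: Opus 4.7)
The plan is to use the implicit optimality condition (\ref{OptY}) written for two consecutive sizes and subtract, turning the monotonicity of $L_n Y_n$ into a direct consequence of the monotonicity of $Y_n$ already established in Theorem \ref{TheoY2}. Concretely, for every $n\geq 2$ I have
\begin{equation*}
\gamma\bigl[\Phi(Y_{n-1})-\Phi(Y_n)\bigr]
=\tilde{\gamma}\bigl[\Phi(L_n Y_n)-\Phi(L_{n-1}Y_{n-1})\bigr],
\end{equation*}
and the standing assumption $t<r-\nu$ gives $\tilde{\gamma}>0$, while $\gamma>0$ for any positive transportation cost.

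Take the under-mean case first, so $Y_1<0$. By Lemma \ref{LemY1} every $Y_n$ is negative, and by Theorem \ref{TheoY2} the sequence $Y_n$ is strictly increasing. Applying the monotone function $\Phi$ to $Y_{n-1}<Y_n$ makes the left-hand side of the displayed identity strictly negative, hence the right-hand side is strictly negative too, which by monotonicity of $\Phi$ gives $L_n Y_n<L_{n-1}Y_{n-1}$. Iterating from $n=2$ and using $L_1=1$, so that $L_1 Y_1=Y_1$, yields the chain $0>Y_1>L_2Y_2>\cdots >L_n Y_n>\cdots$. The over-mean case is symmetric: $Y_1>0$ and $Y_n$ strictly decreasing (both from the mirrored halves of Lemma \ref{LemY1} and Theorem \ref{TheoY2}) flip both inequalities, delivering $0<Y_1<L_2Y_2<\cdots$.

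I expect no serious obstacle; the step that must be handled carefully is the sign bookkeeping, namely verifying that $\tilde{\gamma}$ is strictly positive (which is exactly the non-triviality assumption $t_{ij}<r_j-\nu_i$ specialised to identical agents) and that $\Phi$ can be inverted monotonically on both sides of the subtracted optimality conditions. Once those are in place, the monotonicity of $L_n Y_n$ is simply inherited from the monotonicity of $Y_n$ through the balance equation above.
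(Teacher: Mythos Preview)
Your argument is correct and follows essentially the same route as the paper: subtract the optimality condition (\ref{OptY}) at sizes $n-1$ and $n$, use the strict monotonicity of $Y_n$ from Theorem \ref{TheoY2} (together with Lemma \ref{LemY1}) to fix the sign of one side, and then read off the monotonicity of $L_nY_n$ from the other side via the monotonicity of $\Phi$. Your explicit remark that $\tilde{\gamma}>0$ and $\gamma>0$ are needed for the sign transfer is a welcome clarification.
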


\begin{proof}
By (\ref{OptY}), for arbitrary $n$ we have%
\begin{equation}
\gamma \left[ \Phi (Y_{n})-\Phi (Y_{n-1})\right] =\left( 1-\gamma \right) \left[ \Phi \left( L_{n-1}Y_{n-1}\right) -\Phi \left( L_{n}Y_{n}\right) \right].  \label{EQP1} \end{equation}
 For the case with $Y_{1}<0$, the sequence $Y_{n}$ is
increasing and negative (by Lemma 1 and Theorem 1). Hence $\Phi (Y_{n})-\Phi
(Y_{n-1})<0.$ In order for the equality to hold in (\ref{EQP1}), we must
have $\Phi \left( L_{n-1}Y_{n-1}\right) -\Phi \left( L_{n}Y_{n}\right)
<0$ as well. The monotone increasing property of $\Phi $ requires that $%
L_{n}Y_{n}>L_{n-1}Y_{n-1}$. 
\end{proof}

Theorem \ref{TheoY2} and Theorem \ref{Theo2fh} show a \emph{complementary}
behavior of the sequences $Y_{n}$ and $L_{n}Y_{n}$; whenever one of them is \emph{%
descending} the other must be \emph{ascending}. This must be so in order to
satisfy the equation (\ref{OptY}). We are now ready to present the main
result of this section.

\begin{theorem}
\label{lim} Let $Y_{\infty}=\lim_{n\rightarrow \infty }Y_{n}$ and $%
L_{\infty}Y_{\infty}=\lim_{n\rightarrow \infty }L_{n}Y_{n}$. The following statements are true:

\begin{table}[htbp]
\centering
\begin{tabular}{cc|cc}
Game Type & Cut & $\Phi (Y_{\infty})$ & $\Phi (L_{\infty}Y_{\infty})$ \\ \hline\hline
\multirow{2}{*}{Over-mean} & $t/2<\tilde{g}$ & $1/2$ & $1-\left( \tilde{g}%
-t/2\right) /p$ \\ 
& $t/2\geq \tilde{g}$ & $1-\tilde{g}/t$ & $1$ \\ \hline
\multirow{2}{*}{Under-mean} & $t/2<g$ & $1/2$ & $\left( g-t/2\right) /p$ \\ 
& $t/2\geq g$ & $g/t$ & $0$  
\end{tabular}
\end{table}
\end{theorem}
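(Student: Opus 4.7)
The plan is to take $n\to\infty$ in the optimality condition (\ref{OptY}) and then, via the cutoff on $t$, determine which of $Y_\infty$ and $L_\infty Y_\infty$ sits at a boundary ($0$ or $\pm\infty$) and which is interior. First, by Theorem~\ref{TheoY2} (together with Lemma~\ref{LemY1}) the sequence $\{Y_n\}$ is monotone and sign-constant, hence has a limit $Y_\infty$ in the extended reals; by Theorem~\ref{Theo2fh} the sequence $\{L_nY_n\}$ is monotone in the opposite direction and has a limit $L_\infty Y_\infty$ in the extended reals. Since $L_n=\sqrt{n/[1+(n-1)\rho]}$ is unbounded (in particular under the independence convention $\rho=0$ that is implicit in the table), at least one of $Y_\infty=0$ or $L_\infty Y_\infty=\pm\infty$ must hold.

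Passing to the limit in (\ref{OptY}), with $\Phi$ extended continuously by $\Phi(\pm\infty)\in\{0,1\}$, yields
\[
R=\gamma\,\Phi(Y_\infty)+\tilde\gamma\,\Phi(L_\infty Y_\infty).
\]
In the over-mean regime I would then test the two candidate configurations against this equation. If $t/2<\tilde g$, assuming $Y_\infty>0$ forces $L_\infty Y_\infty=+\infty$, whereupon the limit equation gives $\Phi(Y_\infty)=(R-\tilde\gamma)/\gamma=1-\tilde g/t<1/2$, contradicting $Y_\infty>0$; hence $Y_\infty=0$, and solving for the remaining term produces $\Phi(L_\infty Y_\infty)=(R-\gamma/2)/\tilde\gamma$, which via $p=r-\nu-t$ and $g+\tilde g=r-\nu$ simplifies to $1-(\tilde g-t/2)/p$. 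If instead $t/2\ge\tilde g$, then the hypothesis $L_\infty Y_\infty<\infty$ (which would force $Y_\infty=0$) yields $\Phi(L_\infty Y_\infty)=(g-t/2)/p\ge 1$, impossible; so $L_\infty Y_\infty=+\infty$ and the limit equation gives $\Phi(Y_\infty)=1-\tilde g/t$. The under-mean rows of the table then follow by the analogous sign-symmetric argument, swapping the roles of $g$ and $\tilde g$ and flipping all inequality directions.

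The main obstacle is the indeterminate nature of the product $L_nY_n$: in the borderline configurations $L_n\to\infty$ while $Y_n\to 0$, so the limit of the product is a priori unconstrained. The monotonicity supplied by Theorem~\ref{Theo2fh} rescues the existence of $L_\infty Y_\infty$ in the extended reals, and the threshold test $t/2\lessgtr\tilde g$ (resp.\ $t/2\lessgtr g$) is precisely what selects, through feasibility of the implied $\Phi$-value in $[0,1]$, whether the finite or the infinite branch actually obtains.
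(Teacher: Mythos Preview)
Your argument is correct and follows essentially the same route as the paper: pass to the limit in the optimality condition (\ref{OptY}), use the monotonicity from Theorems~\ref{TheoY2} and~\ref{Theo2fh} to ensure both limits exist, invoke the unboundedness of $L_n$ to force the dichotomy ``$Y_\infty=0$ or $L_\infty Y_\infty=\pm\infty$,'' and then let feasibility of the resulting $\Phi$-value select the branch according to the threshold. The only cosmetic difference is that the paper treats the under-mean case and argues ``branch $\Rightarrow$ threshold,'' whereas you treat the over-mean case and argue ``threshold $\Rightarrow$ branch (by contradiction)''; these are equivalent. Your explicit remark that the table presumes $L_n\to\infty$ (e.g.\ $\rho\le 0$) is a point the paper leaves implicit.
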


\begin{proof}
We need only consider the under-mean case where we have 
$g<\tilde{g}$. 
The sequence $\{Y_{n}\}$ is monotonic increasing and bounded above by $0$, so it
converges to some finite limit. Hence $\{\Phi (Y_{n})\}$ also is a
monotonically increasing sequence bounded above by $1/2$, so that $0<\Phi
(Y_{\infty})\leq 1/2$. Thus, $\{\Phi (L_{n}Y_{n})\}$ must be a monotonically decreasing
sequence bounded below by $0$, so that $0\leq \Phi (L_{\infty}Y_{\infty})<1/2$.

There are only two possible scenarios for $%
\Phi (Y_{\infty})$ and $\Phi (L_{\infty}Y_{\infty})$: $\{\Phi (Y_{\infty})=1/2$ and $\Phi (L_{\infty}Y_{\infty})>0\}$, or $\{\Phi
(Y_{\infty})\leq 1/2$ and $\Phi (L_{\infty}Y_{\infty})=0\}$ (the case of $\{\Phi (Y_{\infty})\leq 1/2$ and $\Phi
(L_{\infty}Y_{\infty})>0\}$ is impossible). 

Suppose $\Phi (L_{\infty}Y_{\infty})=0$, the second case. Then from equation (\ref{OptY}) we have $R=\gamma \Phi (Y_{\infty}) \leq 1/2
$ so $g \leq t/2$ and $t \geq 2g$. In fact,  $\Phi (Y_{\infty})= R/\gamma = g/t$.
Now suppose $\Phi (Y_{\infty})=1/2$ for the first case. From equation (\ref{OptY}) we have $R = \gamma/2 + (1-\gamma)\Phi (L_{\infty}Y_{\infty})$
so that $g = t/2 + p\Phi (L_{\infty}Y_{\infty})$ and $\Phi (L_{\infty}Y_{\infty}) = (g-t/2)/p$. But for this to be positive we must have $g>t/2$ so that $t<2g$.
\end{proof}
Figure \ref{fig:quant} shows the $Y_{\infty}$ as a function of $t$ for over-mean and under-mean problems.
\begin{figure}[b] 
\centering
\subfloat[Over-mean
Games]{\includegraphics[width=0.50\textwidth]{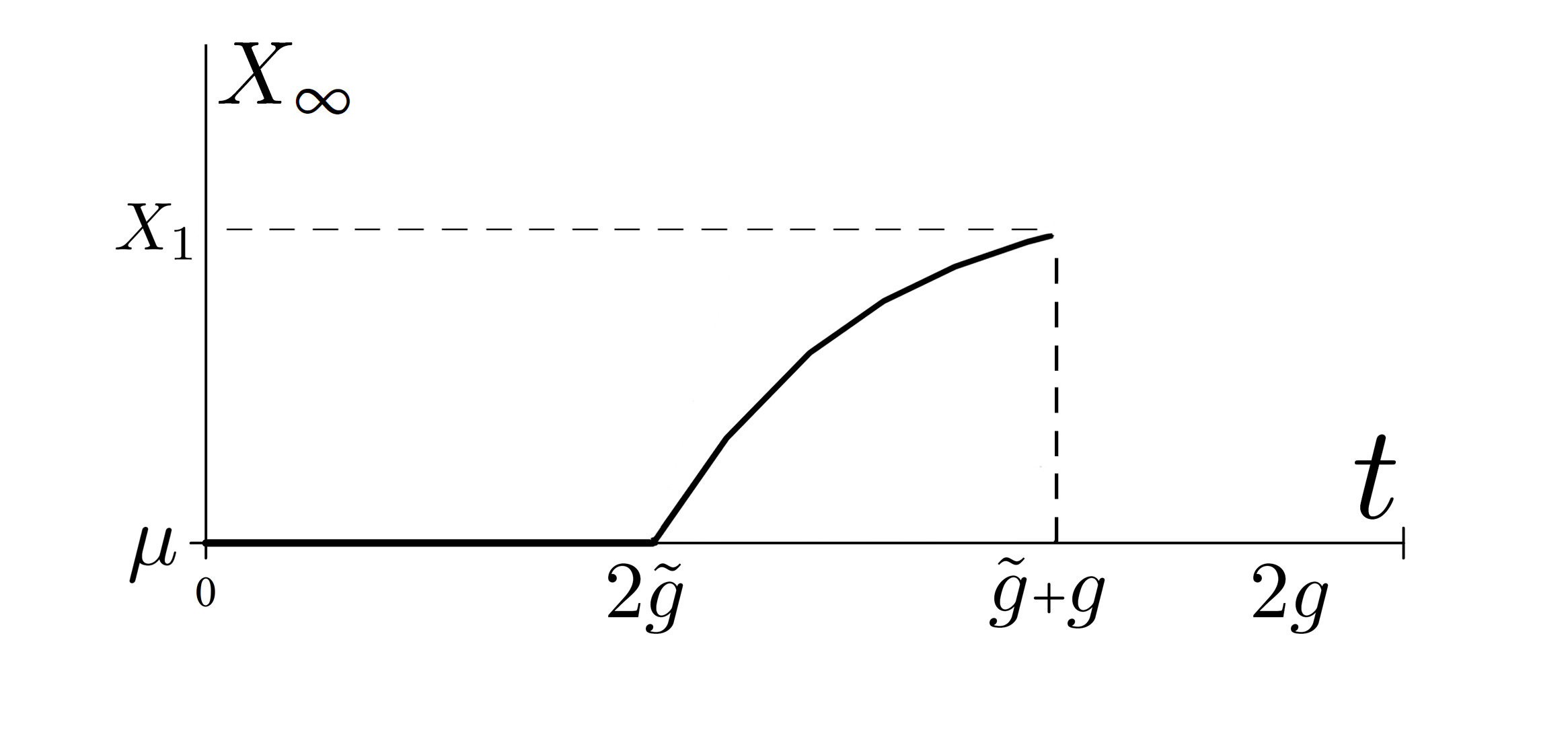}}  
\subfloat[Under-mean
Games]{\includegraphics[width=0.50\textwidth]{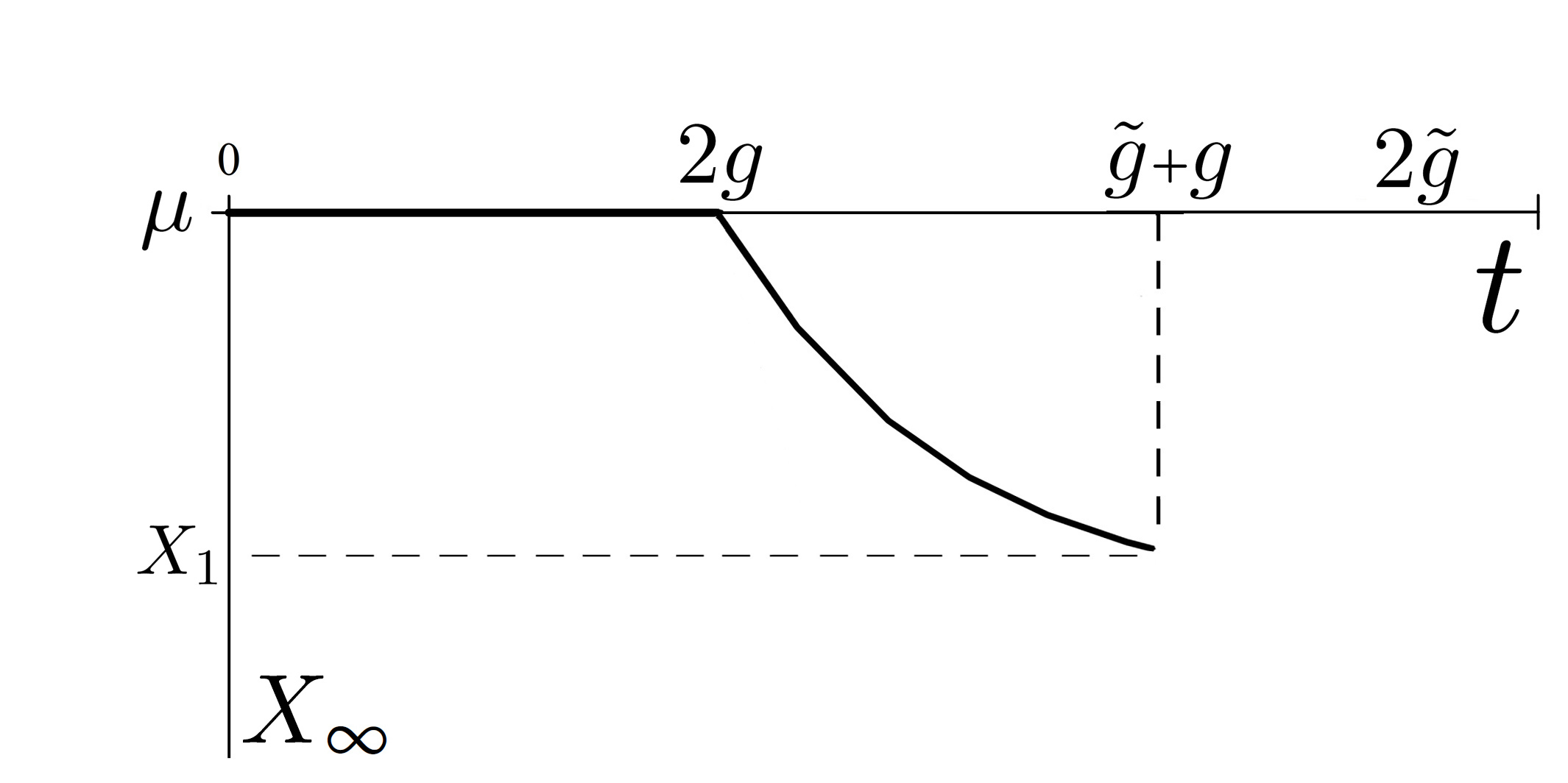}}
\caption{$Y_{\infty}$ as a function of $t$.}
\label{fig:quant}
\end{figure}

\vskip 1ex

\subsection{Interpretation of the Cut Values}
The cut value in Theorem \ref{lim} has an interesting interpretation. 
When transshipment occurs, the cost of transport is $t$ per unit, and equal division of the cost allocates $t/2$ to the sender and the receiver. 

For the under-mean case we have $t/2<\tilde{g}$; thus, this share of transport cost is less than the cost of markdown for senders, who are \emph{always} better off by transshipping. Now, the incentives at the receivers' side depend on their share of transport cost, $t/2$ as well. In the case where the receiver's share of transport cost $t/2$ is less than benefit on a sale $g$, they are willing to do transshipment as well. Therefore, the coalition could ideally order a quantity that results in a perfect match of total expected shortage and surplus. This, by Theorem \ref{Theo00} and \ref{Theo01}, would happen at the mean. However, since we deal with under mean games there always is some positive net expected shortage in the game since $Y_n<0$ for each $n$; thus, the optimal order size being equal, the mean can only be the limit of these order sizes. In the case where the receiver's share of transport cost $t/2$ is higher than benefit on a sale $g$, the receivers do not see transshipment as a totally desirable option. Therefore the conflicting incentives of senders and receivers stop the coalition order quantity at $Y_{\infty }=\Phi ^{-1}(g/t)$, which is short of the mean. The net shortage $S_n$ in the under-mean game of size $n$ equals $\sum_{i=1}^{n}E_{i}- \sum_{i=1}^{n}H_{i}= \sum_{i=1}^{n}D_{i}-nX_n$. This shortage occurs after all transshipments have been done. This shortage needs to be equally shared by all newsvendors, and the newsvendor's share is $S_n/n= (\sum_{i=1}^{n}D_{i})/n-X_n$. 
This share is normally distributed with mean $\mu-X_n=-\sigma Y_n$ and standard deviation $\sigma$. Therefore, $P(S_n/n\leq 0)=\Phi(Y_n)$, and the probability $P(S_n\leq 0)=\Phi(Y_n)$ that the shortage does \emph{not} occur tends to $g/t$ as $n$ increases for under-mean games.

For over mean games $t/2 < g$; thus, this share of transport cost is less than a sale for a receiver, who is \emph{always} better off by accepting transshipment. 
In the case where the senders' share of transport cost $t/2$ is less than markdown cost $\tilde{g}$, they too would rather increase the chances of transshipment. Therefore, the coalition could ideally order a quantity that results in a perfect match of total expected shortage and surplus. 
However, since we deal with over mean games there always is some positive net expected surplus in the game since $Y_n>0$ for each $n$; thus, the optimal order size equal to the mean can only be the limit of these order sizes. 
In the case that $t/2> \tilde{g}$, the share of transport cost is more than the cost of marking down for the senders. Thus, the potential sender may prefer markdown over transshipment. Therefore the conflicting incentives of senders and receivers stops the coalition order quantity at $Y_{\infty }=\Phi ^{-1}(t-\tilde{g})/t$ which is above the mean. Therefore, the probability $P(S_n> 0)=\Phi(Y_n)$ that the shortage \emph{does} occur tends to $\tilde{g}/t$ as $n$ increases for over-mean games.

\section{Formula for Equal Core Allocations}

\label{EP} We now derive a closed-form formula for the maximum expected
profit $\dot{J}_{n}$ in symmetric transshipment with normally distributed
demands. Start from equation (\ref{Prof2}); through standardization, 
changing the integral arguments, and integration by
parts we get 
\begin{equation}
J_{n}(X)=n\left( g+\tilde{g}\right) \left( R(\mu +\sigma Y)-\gamma \sigma %
\left[ \phi (Y)+Y\Phi (Y)\right] -\tilde{\gamma}\sigma \left[ \phi \left(
L_{n}Y\right) /L_{n}+Y\Phi \left( L_{n}Y\right) \right] \right) 
\end{equation}%
For optimal order quantities, $Y_{n}$, applying the optimality conditions in
(\ref{OptY}) obtains the following closed form expression: 
\begin{equation}
\dot{J}_{n}=n\left( g+\tilde{g}\right) \left( R\mu -\sigma \left[ \gamma
\phi (Y_{n})+\tilde{\gamma}\phi \left( L_{n}Y_{n}\right) /L_{n}\right]
\right).  \label{EqnL}
\end{equation}
Thus the formula for equal core allocation is as follows
\begin{equation}
\beta_{n}=\left( g+\tilde{g}\right) \left( R\mu -\sigma \left[ \gamma
\phi (Y_{n})+\tilde{\gamma}\phi \left( L_{n}Y_{n}\right) /L_{n}\right]
\right).  \label{EqnL2}
\end{equation}
Equation (\ref{EqnL}) and (\ref{EqnL2}) do not guarantee that the $\dot{J}_{n}$ and $\beta_{n}$ are always non-negative.
 This is due to the fact that under normal distribution with relatively large standard deviations, negative market demands are non-negligible \citep{Hartman2005}. In order to avoid this, it suffices to assume that $\sigma/\mu \leq g/\left[(g+\tilde{g})\phi\left(\Phi ^{-1}\left( R\right)\right) \right]$.

\paragraph{Acknowledgments}
This research has been supported by the Natural Sciences and Engineering
Research Council of Canada (NSERC) Grant OPG0105675. Moreover, the research
of Behzad Hezarkhani has also been supported by the Canadian Purchasing
Research Foundation. The authors are greatly indebted for these supports.

\end{document}